\def\Prob{{\mathrm{Prob}}}
\newtheorem{proposition}{Proposition}
\begin{document}
\title{The Monty Hall Problem is not \\  a Probability Puzzle\footnote{~ v.4. Revision of v.3 \texttt{arXiv.org:1002.0651}, to appear in \emph{Statistica Neerlandica} }\\{\small (It's a challenge in mathematical modelling)}}
\author{Richard D. Gill\footnote{~Mathematical Institute, University Leiden; during 2010--11 the author is Distinguished Lorentz Fellow at the  Netherlands Institute of Advanced Study, Wassenaar.
\texttt{http://www.math.leidenuniv.nl/$\sim$gill}}}
\date{12 November, 2010}
\maketitle
\begin{abstract}
\noindent \emph{Suppose you're on a game show, and you're given the choice of three doors: Behind one door is a car; behind the others, goats. You pick a door, say No.~1, and the host, who knows what's behind the doors, opens another door, say No.~3, which has a goat. He then says to you, ``Do you want to pick door No.~2?'' Is it to your advantage to switch your choice?} 

The answer is ``yes'' but the literature offers many reasons why this is the correct answer. The present paper argues that the most common reasoning found in introductory statistics texts, depending on making a number of ``obvious'' or ``natural'' assumptions and then computing a conditional probability, is a classical example of solution driven science. The best reason to switch is to be found in von Neumann's minimax theorem from game theory, rather than in Bayes' theorem.
\end{abstract}

\section{Introduction}
\noindent In the above abstract to this paper, I reproduced The Monty Hall Problem, as it was defined by Marilyn vos Savant in her ``Ask Marilyn'' column in \emph{Parade} magazine (p.~16, 9 September 1990). Marilyn's solution to the problem posed to her by a correspondent Craig Whitaker sparked a controversy which brought the Monty Hall Problem to the attention of the whole world. Though MHP probably originated in a pair of short letters to the editor in \emph{The American Statistician} by biostatistician Steve Selvin (1975a,b), from 1990 on it was public property, and has sparked research and controversy in mathematical economics and game theory, quantum information theory, philosophy, psychology, ethology, and other fields, as well as having become a fixed point in the teaching of elementary statistics and probability.

This has resulted in an enormous literature on MHP.  Here I would like to draw attention to the splendid book by Jason Rosenhouse (2009), which has a huge reference list and which discusses the pre-history and the post-history of vos Savant's problem as well as many variants. My other favourite is Rosenthal (2008), one of the few papers where a genuine attempt is made to argue to the layman why MHP has to be solved with conditional probability. Aside from these two references, the English language wikipedia page on the Monty Hall Problem, as well as its discussion page, is a rich though every-changing resource. Much that I write here was learnt from the many editors of those pages, both allies and enemies in the never ending edit-wars which plague it.

The battle among wikipedia editors could be described as a battle between intuitionists versus formalists, or to use other words, between simplists versus conditionalists. The main question which is endlessly discussed is whether simple arguments for switching, which typically show that the \emph{unconditional} probability that the switching gets the car is 2/3, may be considered rigorous and complete solutions of MHP. The opposing view is that vos Savant's question is only properly answered by study of ``the'' \emph{conditional} probability that the switching gets the car, \emph{given} the initial choice of door by the player and door opened by the host. This more sophisticated approach requires making more assumptions, and that leads to the question whether those supplementary conditions are implicitly implied by vos Savant's words. What particularly interests me, however, is that the conditionalists take on a dogmatic stance: their point of view is put forward as a moral imperative. This leads to an impasse, and the clouds of dust thrown up by what seems a religious war conceal what seem to me to be much more interesting, though more subtle, questions.  

My personal opinion on the wikipedia-MHP-wars is that they are fights about the wrong question. Craig Whitaker, through the voice of Marilyn vos Savant, asks for an action, not a probability. I think that game theory gives a more suitable framework in which to represent our ignorance of the mechanics of the set-up (where the car is hidden) and of the mechanics of the host's choice, than subjectivist probability.

Therefore, though Rosenhouse's book is a wonderful resource, I strongly disagree with him, as well as with many other authors, on what the essential Monty Hall problem is (and that is the main point of this paper). Deciding unilaterally (Rosenhouse, 2009) that a certain formulation is \emph{canonical} is asking for schism. Calling a specific version \emph{original} (Rosenthal, 2008) is asking for contradiction. Rosenthal states without any argument at all that additional assumptions are implicitly contained in vos Savant's formulation. Selvin (1975a) did state all those assumptions explicitly but strangely enough did not use all of them. His second paper Sevin (1975b) gave a new solution using all his original assumptions but the author does not seem to notice the difference. At the same time, he quotes with approval a simplist solution of Monty Hall himself, who sees randomness in the choice of the player rather than in the actions of the team who prepare the show in advance, and the quiz-master himself. Vos Savant did not use the full set of assumptions which others find implicit in her question. Her relatively simple explanation of why one should switch seems to satisfy everyone except for the writers of elementary texts in statistics and probability. I have the impression that words like original, canonical, standard, complete are all used to hide the paucity of argument of the writer who needs to make that extra assumption in order to be able to apply the tool which they are particularly fond of, conditional probability. 

One of the most widely cited but possibly least well read papers in MHP studies is Morgan et al.~(1991a), published together with a discussion by Seymann (1991) and a rejoinder Morgan et al.~(1991b).  Morgan et al.~(1991a)  firmly rebuke vos Savant for not solving Whitaker's problem as they consider should be done, namely by conditional probability. They use only the assumption that all doors are initially equally likely to hide the car; this assumption is hidden within their calculations. The paper was written during the peak of public interest and heated emotions about MHP which arose from vos Savant's column.  It actually contains an unfortunate error which was only noticed 20 years later by wikipedia editors Hogbin and Nijdam (2010): if the player puts a non-informative and hence symmetric Bayesian prior on the host's bias in opening a door when he has a choice, it will be equally likely (for the player) that the host will open either door when he has the opportunity.  Morgan et al.~(2010) acknowledge the error and also reproduce part of Craig Whitaker's original letter to Marilyn vos Savant whose wording is even more ambiguous than vos Savant's.

Rosenhouse (2009), Rosenthal (2005, 2008), Morgan et al.~(1991a,b, 2010), and Selvin (1975b) (but not Selvin, 1975a) solve MHP using elementary \emph{conditional} probability. In order to do so they are obliged to add mathematical assumptions to vos Savant's words, without which the conditional probability they are after is not determined. Actually, and I think tellingly, almost no author gives any argument at all why we \emph{must} solve vos Savant's question by computing a conditional probability that the other door hides the car, conditional on which door was first chosen by the player and which opened by the host. 

For whatever reasons, it has become conventional in the elementary statistics literature, where MHP features as an exercise in the chapter on Bayes' theorem in discrete probability, to take it for granted that the car is initially hidden ``at random'', and the host's choice, if he is forced to make one,  is ``at random''  too.  Morgan et al.~(1991a) are notable in only making the first assumption. Many writers also have the player's initial choice ``at random'' too.  ``At random''  is a code phrase for what I would prefer to call \emph{completely} at random. The student is apparently supposed to make these assumptions by default, though sometimes they are listed without motivation as if they are always the right assumptions to make. 

In my opinion, this approach to MHP is an example of \emph{solution driven science}, and hence an example of bad practise in mathematical modelling. Taking for granted that unspecified probability distributions must be uniform or normal, depending on context, is the cause of such disasters as the miscarriage of justice concerning the Dutch nurse Lucia de Berk, or the doping case of the German skater Claudia Pechstein.  Of course, MHP does indeed provide a nice exercise in conditional probability, provided one is willing to fill in gaps without which conditional probability does not help you answer the question whether you should stay or switch. Morgan et al. (1991a)'s original contribution is to notice the minimal condition under which conditional probability does give an unequivocal solution. 

Precisely because of all these issues, MHP presents a beautiful playground for learning the art of mathematical modelling.  For me, MHP is the problem of how to build a bridge from vos Savant's words to a mathematical problem, solve that problem, \emph{and} translate the solution back to the real world. \emph{If}  we use probability as a tool in this enterprise, we are going to have to motivate probabilistic assumptions. We must also \emph{interpret}  probabilistic \emph{conclusions}. Like it or not, the interpretation of probability plays a role, going both directions. 

Real world problems are often brought to a statistician because the person with the question, for some reason or other, thinks the statistician must be able to help them. The client has often already left out some complicating factors, or made some simplifications, which he thinks that the statistician doesn't need to know. The first job of the consulting statistician is to find out what the real question is with which the client is struggling, which may often be very different from the imaginary statistical problem that the client thinks he has. The first job of the statistical consultant is to undo the pre-processing of the question which his client has done for him.

In mathematical model building we must be careful to distinguish the parts of the problem statement which are truly determined by the background real world problem, and parts which represent hidden assumptions of the client who thinks he needs to enlist the statistician's aid and therefore has already pre-processed the original question so as to fit in the client's picture of what a statistician can do. The result of a statistical consultation might often be that the original question posed by the client is reformulated, and the client goes home, happier, with a valuable answer to a more meaningful question than the one he brought to the statistician. Maybe this is the real message which the Monty Hall Problem should be telling us? What if vos Savant's opening words had been ``\emph{Suppose you're \emph{going to be} on a game show tonight. If you make it to the last round, you'll be given the choice of three doors...}''?

\section{The mathematical facts}
In this section, I present some elementary mathematical facts, firstly from probability theory, secondly from game theory. The results are formulated within a mathematical framework which does not make any assumptions restricting the scope of the present discussion. Modelling all the various door choices as random variables does not exclude the case that they are fixed. It also leaves the question completely open how we think of probability: in a frequentist or in a Bayesian sense. I impose only the ``structural'' conditions on the sequence of choices, or moves, which are universally agreed to be implied by vos Savant's story.

\subsection{What probability theory tells us}

I distinguish four consecutive  actions: 

\begin{enumerate}

\item Host: hiding the car before the show behind one of three doors, \textsf{Car}

\item Player: choosing a door, \textsf{P1}

\item Host: revealing a goat by opening a different door, \textsf{Goat}

\item Player: switching or staying, final choice door \textsf{P2}

\end{enumerate}

The doors are conventionally labelled ``1'', ``2'' and ``3'', and we can represent the door numbers specified by the four actions with random variables \textsf{Car, P1, Goat, P2}. Since two doors hide goats and one hides a car and the host knows the location of the car, he can and will open a door different to that chosen by the player and revealing a goat. I allow both the location of the car \textsf{Car} and the initial choice of the player \textsf{P1} to be random, and assume them to be statistically independent. From different modelling points of view, we might want to take either of these two variables to be fixed; the independence assumption is then of course harmless. Given the location of the car and the door chosen by the player, the host opens a different door \textsf{Goat} revealing a goat, according to a probability distribution over the two doors available to him when he has a choice (which includes the case that he follows some fixed selection rule). Then the player makes his choice \textsf{P2}, deterministically or according to a probability distribution if he likes, but in either case only depending on his initial choice and the door opened by the host. Finally we can see whether he goes home with a car or a goat by opening his door and revealing his \textsf{Prize}.

The probabilistic structure of the four actions together with the final result \textsf{Prize} (whether the player goes home with a car or a goat) can be represented in the graphical model or Bayes net shown in Figure 1. This diagram (drawn using the \textsf{gRain} package in the statistical package R) was inspired by Burns and Wieth (2004) who performed psychological experiments to test their hypothesis that people fail MHP because of their inability to internalise the \emph{collider principle}: conditional on a consequence, formerly independent causes become correlated. In this case, the initially statistically independent initial moves \textsf{Car} and \textsf{P1} of host and player are conditionally \emph{dependent} of one another given the door \textsf{Goat} opened by the host.

\begin{figure}
\begin{center}
\includegraphics[width=8cm]{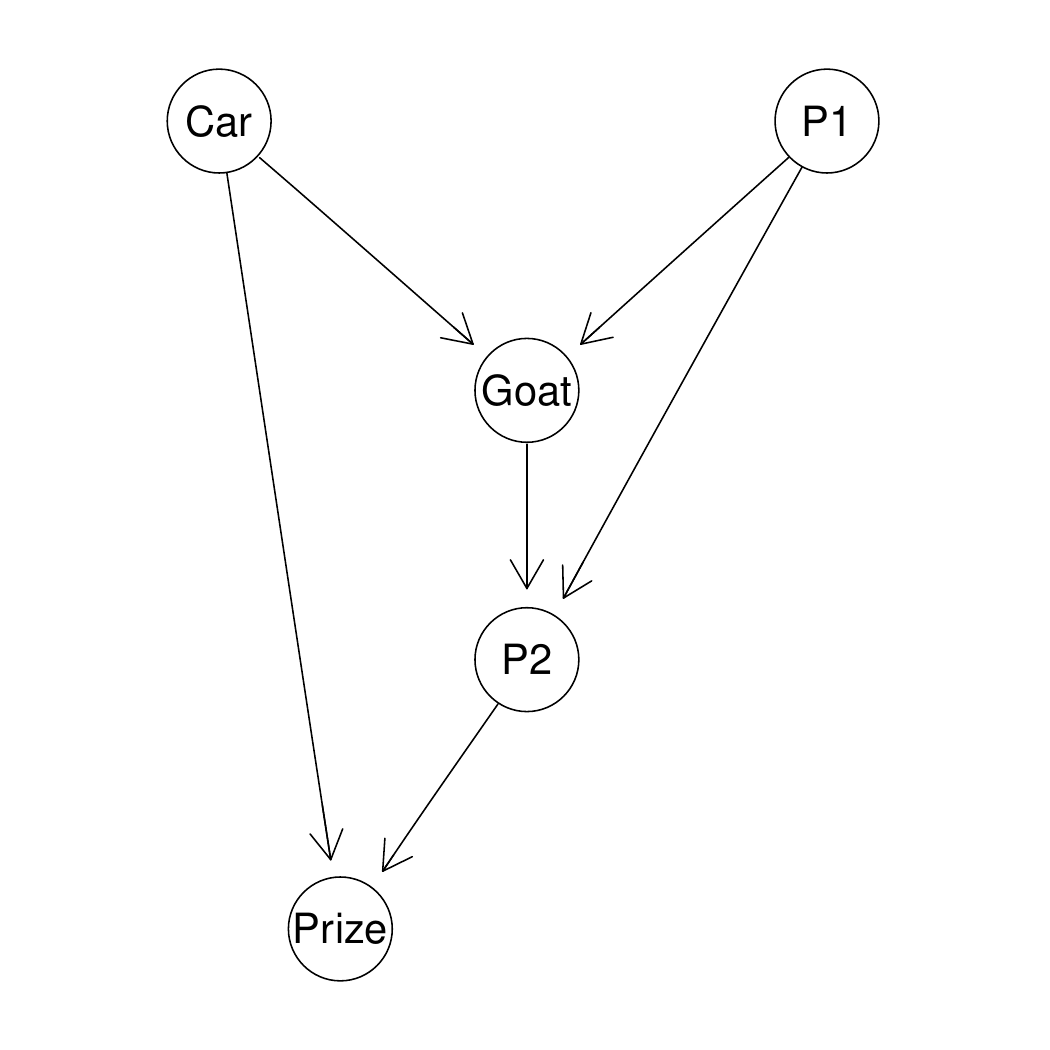}\\[0.2cm]
\caption{{A Graphical Model (Bayes Net) for MHP}}
\end{center}
\end{figure}

I now write down three simple propositions, each making in turn a stronger mathematical assumption, and each in turn giving a better reason for switching.

\begin{proposition} If the player's initial choice has probability $1/3$ to hit the car, then always switching gives the player the car with (unconditional) probability $2/3$ \emph{(Monty Hall, as reported by Selvin, 1975b)}.
\end{proposition}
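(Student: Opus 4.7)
My plan is to prove this by conditioning on whether the player's initial choice hits the car, using the law of total probability. The event $\{\textsf{P1}=\textsf{Car}\}$ has probability $1/3$ by hypothesis, and its complement has probability $2/3$. The switching strategy is the deterministic rule $\textsf{P2} \ne \textsf{P1}$ and $\textsf{P2}\ne \textsf{Goat}$, so I need to show that $\Prob(\textsf{P2}=\textsf{Car}) = 2/3$.

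The key structural observation is that on the event $\{\textsf{P1}\ne \textsf{Car}\}$, the host has \emph{no} freedom: he must avoid \textsf{P1} (the player's door) and he must avoid \textsf{Car} (since he reveals a goat), and these are two distinct doors, so \textsf{Goat} is forced to be the unique third door. Consequently the door the switcher picks, namely the one that is neither \textsf{P1} nor \textsf{Goat}, is necessarily \textsf{Car}. Hence on $\{\textsf{P1}\ne \textsf{Car}\}$ the switcher wins with conditional probability $1$. Conversely, on $\{\textsf{P1}=\textsf{Car}\}$ the host opens some door hiding a goat (according to whatever rule he uses) and the switcher moves to the other goat door; the switcher wins with conditional probability $0$.

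Combining the two cases by the law of total probability gives
\begin{equation*}
\Prob(\textsf{P2}=\textsf{Car}) = 1\cdot \Prob(\textsf{P1}\ne\textsf{Car}) + 0\cdot \Prob(\textsf{P1}=\textsf{Car}) = \tfrac{2}{3}.
\end{equation*}

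There is essentially no obstacle here: the proposition is deliberately weak in its hypothesis precisely so that the only case requiring any description of the host's behaviour (namely when he has a genuine choice of which goat-door to open) is the case where the switcher loses anyway, so the host's strategy drops out of the calculation. That is exactly what makes this an \emph{unconditional} statement and why no assumption of independence between \textsf{Car} and \textsf{P1}, nor of uniformity of the host's choice, is needed; only the marginal hit-probability $\Prob(\textsf{P1}=\textsf{Car}) = 1/3$ matters.
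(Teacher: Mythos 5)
Your proof is correct and is essentially the paper's own argument written out in full: the paper's one-line observation that ``a switcher wins if and only if a stayer loses'' is exactly the content of your case analysis, since the stayer loses precisely on the event $\{\textsf{P1}\ne\textsf{Car}\}$, where (as you verify) the host's move is forced and the switcher necessarily wins. Your version merely makes explicit the forced-move reasoning and the application of the law of total probability that the paper leaves implicit.
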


\begin{proposition} If initially all doors are equally likely to hide the car, then, given the door initially chosen and the door opened,  switching gives the player the car with conditional probability at least $1/2$ \emph{(Morgan et al., 1991a)}. 

Consequently, not only does ``always switching'' give the player the car with unconditional probability $2/3$, but no other strategy gives a higher success chance.
\end{proposition}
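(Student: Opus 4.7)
The plan is to reduce the conditional probability computation to a single Bayes-rule calculation parametrized by the host's bias, and then argue that pointwise (in the conditioning event) optimality of switching immediately yields unconditional optimality.

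First I would fix labels: write $\textsf{P1}=i$, $\textsf{Goat}=j$, and let $k$ denote the remaining door (so $\{i,j,k\}=\{1,2,3\}$). Switching wins iff $\textsf{Car}=k$. Let $p_i=\Prob(\textsf{P1}=i)$ (arbitrary, since $\textsf{P1}$ is independent of $\textsf{Car}$) and introduce the host's bias parameter $q=q_{i,j}:=\Prob(\textsf{Goat}=j\mid \textsf{Car}=i,\textsf{P1}=i)\in[0,1]$; note that when $\textsf{Car}\ne\textsf{P1}$ the host's door is forced. The conditioning event decomposes into two disjoint pieces, $\textsf{Car}=i$ and $\textsf{Car}=k$ (the case $\textsf{Car}=j$ is impossible since door $j$ was opened to reveal a goat). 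Using the uniform prior $\Prob(\textsf{Car}=\cdot)=1/3$ and the independence of $\textsf{Car}$ and $\textsf{P1}$, Bayes' rule yields
\[
\Prob(\textsf{Car}=k\mid \textsf{P1}=i,\textsf{Goat}=j)=\frac{\tfrac13\,p_i\cdot 1}{\tfrac13\,p_i\cdot q+\tfrac13\,p_i\cdot 1}=\frac{1}{1+q}.
\]
Since $q\in[0,1]$, this is $\ge 1/2$, with equality iff $q=1$. This proves the first sentence of the proposition.

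For the consequence, first I would note that the unconditional success probability of ``always switch'' is $\Prob(\textsf{Car}\ne\textsf{P1})=\sum_i p_i\cdot\tfrac23=\tfrac23$, by Proposition~1 (whose hypothesis is satisfied because $\textsf{Car}$ is uniform and independent of $\textsf{P1}$). For optimality, any player strategy is a (possibly randomised) rule $\sigma(\textsf{P1},\textsf{Goat})\in\{\textsf{P1},k\}$; write its success probability as $\mathbb{E}\bigl[\Prob(\textsf{Car}=\sigma(\textsf{P1},\textsf{Goat})\mid \textsf{P1},\textsf{Goat})\bigr]$. On each atom $\{\textsf{P1}=i,\textsf{Goat}=j\}$ the two available conditional probabilities are $1/(1+q)$ for switching and $q/(1+q)$ for staying, and the former always dominates. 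Hence the pointwise maximum is attained by ``always switch'', and taking expectations over $(\textsf{P1},\textsf{Goat})$ shows that no strategy beats the unconditional value $2/3$.

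The main conceptual hurdle is not any single calculation but rather the bookkeeping: making explicit that $q$ may depend on $(i,j)$, that it is unknown to the player, and that nevertheless the inequality $1/(1+q)\ge 1/2$ holds uniformly in $q$, so the player can act optimally without knowing the host's bias. A secondary subtlety is handling atoms of measure zero (for instance if some $p_i=0$, or if the host deterministically avoids opening a specific door so that some conditioning event has probability zero); these can be discarded without affecting either statement, since the unconditional expectation is unchanged by altering $\sigma$ on a null set.
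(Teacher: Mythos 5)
Your proof is correct and follows essentially the same route as the paper: Bayes' theorem with the host-bias parameter $q$ giving posterior probability $1/(1+q)\ge 1/2$ (the paper states this as posterior odds $1:q$), followed by the same pointwise-dominance plus law-of-total-probability argument for the optimality of always switching. Your version is merely more explicit about the dependence of $q$ on the conditioning doors and about null atoms, which the paper leaves implicit.
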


\begin{proposition} If initially all doors are equally likely to hide the car and if the host is equally likely to open either door when he has a choice, then,  given the door initially chosen and the door opened,  switching gives the player the car with conditional probability $2/3$, whatever door was initially chosen and which door was opened \emph{(Morgan et al., 1991a,b).}
\end{proposition}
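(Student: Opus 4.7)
The plan is to apply Bayes' theorem directly, using the two symmetry hypotheses to pin down the likelihoods in the posterior computation. Fix an arbitrary pair of distinct doors $i\ne j$ and let $k$ be the remaining door. We wish to show that $\Prob(\textsf{Car}=k \mid \textsf{P1}=i,\textsf{Goat}=j) = 2/3$, i.e.\ the posterior probability of the switch door is $2/3$ regardless of $i$ and $j$, which is exactly the content of the proposition.

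First I would decompose the conditioning event into the three possible values of \textsf{Car}. The case $\textsf{Car}=j$ contributes zero, since the host never opens the door hiding the car. By the assumption that \textsf{Car} is uniform and independent of \textsf{P1}, the joint priors $\Prob(\textsf{Car}=i,\textsf{P1}=i)$ and $\Prob(\textsf{Car}=k,\textsf{P1}=i)$ are equal. Next I would compute the conditional likelihoods of $\textsf{Goat}=j$. When $\textsf{Car}=k$, the host has no choice: he must open door $j$, so the likelihood is $1$. When $\textsf{Car}=i$, the host may open either of the two non-car, non-player doors, and by the new hypothesis does so with probability $1/2$ each. Applying Bayes, the posterior odds of $\textsf{Car}=k$ versus $\textsf{Car}=i$ given $(\textsf{P1}=i,\textsf{Goat}=j)$ are $1 : \tfrac12$, i.e.\ $2:1$, giving posterior probability $2/3$ for the switch door.

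Finally I would observe that the calculation did not depend on the particular labels $i$ and $j$; the symmetry of the two hypotheses (uniform \textsf{Car}, symmetric tie-break by the host) makes the conditional probability invariant under relabelling of doors. There is no serious obstacle --- the proof is a three-line Bayes calculation. The only subtlety worth emphasising is the bookkeeping contrast between the two surviving cases (host forced vs.\ host choosing), and the conceptual point, already foreshadowed by Proposition 2, that the second symmetry assumption is precisely what is needed to equalise the host's two possible actions and thereby lift the conditional probability from the generic lower bound ``at least $1/2$'' to exactly $2/3$ on every $(i,j)$ slice.
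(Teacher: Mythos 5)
Your proof is correct, but it follows a different route from the one the paper gives. The paper proves Proposition~3 by symmetry: it first gets the unconditional success probability $2/3$ of always switching (from Proposition~1), then argues that under the uniformity assumptions the problem is invariant under relabelling of the doors, so all six conditional probabilities are equal, and hence by the law of total probability each equals the unconditional value $2/3$. You instead do the direct Bayes computation on a fixed slice $(\textsf{P1}=i,\textsf{Goat}=j)$: prior odds $1{:}1$ for $\textsf{Car}=k$ versus $\textsf{Car}=i$, likelihoods $1$ versus $\tfrac12$, posterior odds $2{:}1$. The paper explicitly acknowledges your route in the remark immediately following its proof (``on taking $q=1/2$, the posterior odds in favour of switching are $2{:}1$''), i.e.\ it is the specialisation of the Proposition~2 argument to $q=1/2$, and the author notes that this explicit-computation style is the standard one in the literature while declaring the symmetry argument the more elegant. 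One thing your version buys: the paper's symmetry proof as written invokes uniformity of the \emph{player's} initial choice, which is not among the hypotheses of Proposition~3 (it can be repaired, since the conditional probability given $\textsf{P1}=i$ does not depend on the marginal law of $\textsf{P1}$ once $\textsf{Car}$ and $\textsf{P1}$ are independent, but that repair is left unstated); your computation conditions on $\textsf{P1}=i$ from the outset and so needs no assumption on the player's distribution beyond the event having positive probability. What the paper's approach buys in exchange is the conceptual point, stressed later in the text, that the conditional answer can be seen to coincide with the unconditional one \emph{before} any computation, because the door labels are irrelevant by symmetry.
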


\begin{proof}  ~ 
\\
\noindent \underline{Prop.~1}: This implication is trivial once we observe that a ``switcher'' wins if and only if a ``stayer'' loses.
\\ 
\\
\noindent \underline{Prop.~2}: We use Bayes' theorem in the form
$$\textit{posterior odds equals prior odds times likelihood ratio.}$$
The initial odds that the car is behind doors 1, 2 and 3 are 1:1:1. The posterior odds are therefore proportional to the probabilities that the host opens Door 3 given the player chose Door 1 and the car is behind Door 1, 2 and 3 respectively. These probabilities are $q$, $1$ and $0$ respectively, where 
$$q~=~\Prob(\,\textrm{Host opens Door 3} \,|\, \textrm{Player chose Door 1,  car is behind same}\,).$$
The posterior odds for switching versus staying are therefore $1:q$, so that staying does not have an advantage over switching, whatever $q$ might be. 

Since all doors are initially equally likely to hide the car, the door chosen by the player hides the car with probability 1/3. The unconditional probability that switching gives the car is therefore 2/3. By the law of total probability, this can be expressed as the sum over all six conditions (door chosen by player, door opened by host), of the probability of that condition times the conditional probability that switching gives the car, under the condition. Each of these conditional probabilities is at least 1/2.  The strategy of always switching can't be beaten, since the success probability of any other strategy is obtained from the success probability of always switching by replacing one or more of the conditional probabilities of getting the car by switching by probabilities which are never larger.
\\ 
\\
\noindent \underline{Prop.~3}:  If all doors are equally likely to hide the car then by independence of the initial choice of the player and the location of the car, the probability that the initial choice is correct is $1/3$. Hence the unconditional probability that switching gives the car is $2/3$. If the player's initial choice is uniform and the two probability distributions involved in the host's choices are uniform, the problem is symmetric with respect to the numbering of the three doors. Hence the conditional probabilities we are after in Proposition 3 are all the same, hence by the law of total probability equal to the unconditional probability that switching gives the car, 2/3.

\end{proof}

Proposition 3 also follows from the inspection of the posterior odds computed in the proof of Proposition 2. On taking $q=1/2$, the posterior odds in favour of switching are 2:1 (Morgan et al., 1991a).

In the literature, Proposition 3 is usually proven by explicit computation or tabulation, i.e., by going back to first principles to compute the conditional probability in question. For instance, Morgan et al.~(1991a) also give this direct computation, attributing it to Mosteller's (1965) solution of the Prisoner's dilemma paradox.  It is often offered as an example of Bayes' theorem, but really is just an illustration of conditional probability via its definition. On the other hand, Bayes' theorem in its odds form (which I used to prove Morgan et al.'s Proposition 2) is a genuine \emph{theorem}, and offers to my mind a much more satisying route for those who like to see a computation and at the same time learn an important concept and a powerful tool. To my mathematical mind the most elegant proof of Proposition 3 is the argument by symmetry starting from Proposition 1: the conditional probability is the same as the unconditional since all the conditional probabilities must be the same. I learnt this proof from Boris Tsirelson on wikipedia discussion pages, but it is also to be found in Morgan et al.~(1991b). 

This proof also supplies one reason why the literature is so confused as to what constitutes a solution to MHP. One could apply symmetry at the outset, to argue that we only want an unconditional probability. There is no point in conditioning on anything which we can see in advance is irrelevant to the question at hand.

The pages of wikipedia, as well as a number of papers in the literature, are the scene of a furious controversy mainly as to whether Proposition 1 and a proof thereof, or Proposition 3 and a proof thereof, is a ``complete and correct solution to MHP''.  These two solutions can be called the simple or popular or unconditional solution, and the full or complete or conditional solution respectively. The situation is further complicated by the fact that many supporters of the popular solution do accept all the symmetry (uniformity) conditions of Proposition 3, for a variety of reasons. I will come back to this in the next main section, but first consider a rather different kind of result which can be obtained within exactly the same general framework as before.

\subsection{What game theory tells us}

Let us think of the four actions of the previous subsection as two pairs of moves in a two stage game between the host and the player in which the player wants to get the car, the host wants to keep it. Von Neumann's minimax theorem tells us that there exist a pair of minimax strategies for player and host, and a value of the game, say $p$, having the following defining characteristics. The minimax strategy of the player (minimizes his maximum chance of losing) guarantees him at least probability $p$ of winning, whatever the strategy of the host; the minimax stategy of the host (minimizes his maximum probability of losing) guarantees him at most probability $p$ of losing. If both player and host play their minimax strategy then indeed the player will win with probability $p$.

\begin{proposition}The player's strategy ``initial choice uniformly at random, thereafter switch'' and the host's strategy ``hide the car uniformly at random, open a door uniformly at random when there is a choice'' form the minimax solution of the finite two-person zero-sum game in which the player tries to maximize his probability of getting the car, the host tries to minimize it. The value of the game is $2/3$.
\end{proposition}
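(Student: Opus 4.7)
My plan is to avoid invoking von Neumann's theorem as a black box, and instead exhibit a saddle point directly: show that the player's proposed strategy guarantees win-probability at least $2/3$ against \emph{every} host strategy, and that the host's proposed strategy caps the win-probability at $2/3$ against \emph{every} player strategy. These two one-sided inequalities together pin down the value of the game as $2/3$ and certify the given pair as an equilibrium.

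The lower bound would come essentially for free from Proposition~1. Whatever joint rule the host uses to place the car and subsequently open a goat-door, the player's \emph{independent} uniform draw of $\mathsf{P1}$ forces $\Prob(\mathsf{P1}=\mathsf{Car})=1/3$ by independence, and Proposition~1 then gives that ``always switch'' wins with unconditional probability $2/3$, irrespective of the host.

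For the upper bound I would fix the host's symmetric strategy and consider an arbitrary player strategy. The odds-form Bayes calculation from the proof of Proposition~2, specialised to $q=1/2$, shows that the conditional probability that switching wins, given any realised $(\mathsf{P1},\mathsf{Goat})$, equals $2/3$, and consequently that staying wins conditionally with probability $1/3$. A general player strategy factorises, through the Bayes-net of Figure~1, as a conditional switching probability $\pi(\mathsf{P1},\mathsf{Goat})\in[0,1]$, so its conditional win-probability is $\tfrac{2}{3}\pi+\tfrac{1}{3}(1-\pi)\le\tfrac{2}{3}$, and averaging over $(\mathsf{P1},\mathsf{Goat})$ preserves this bound uniformly in the marginal distribution of $\mathsf{P1}$. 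The main subtlety to guard against lies in the quantification of the lower bound: it must hold against an \emph{adversarial} host who may correlate $\mathsf{Car}$ with anything he pleases, which is precisely why the player's own independent uniform randomisation of $\mathsf{P1}$ — rather than uniformity of $\mathsf{Car}$ — is what makes the initial-hit probability genuinely $1/3$ in that generality. Once this is pinned down, the two bounds combine cleanly to yield Proposition~4.
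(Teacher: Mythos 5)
Your proposal is correct and follows essentially the same route as the paper: verify the saddle point directly by showing the player's uniform-then-switch strategy guarantees $2/3$ against any host (via Proposition~1 and the independence of \textsf{P1} from \textsf{Car}), and that the host's fully symmetric strategy caps any player strategy at $2/3$ (the paper cites Proposition~2/3 for this, whose proof already contains your explicit $\tfrac{2}{3}\pi+\tfrac{1}{3}(1-\pi)\le\tfrac{2}{3}$ mixture argument). Your remark that the lower bound rests on the player's own independent randomisation rather than on any uniformity of \textsf{Car} is exactly the right point of care, and matches the paper's reasoning.
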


\begin{proof}
We must verify two claims. The first is that whatever strategy is used by the host, the player's minimax strategy guarantees the player a success chance of at least 2/3. The second is that whatever strategy is used by the player, the host's minimax strategy prevents the player from achieving a success chance greater than 2/3.

For the first claim notice that if the player chooses a door uniformly at random and thereafter switches, he'll get the car with probability exactly 2/3; that follows from Proposition 1.

For the second, suppose the host hides the car uniformly at random and thereafter opens a door uniformly at random when he has a choice. Making the initial choice of door in any way, and thereafter switching, gives the player success chance 2/3, and by Proposition 2 (or 3, if you prefer) there is no way to improve this.
\end{proof}

Note that I did not use von Neumann's theorem in any way to get this result: I simply made use of the probabilistic results of the previous subsection. 

MHP was brought to the attention of the mathematical economics and game theory community by a paper of Nalebuff (1987), which contains a number of game-theoretic or economics choice puzzles.  He considered MHP as an amusing problem with which to while away a few minutes during a boring seminar. After describing the problem he very briefly reproduced the short solution corresponding to Proposition 1. He enigmatically drops the names of Neumann-Morgenstern and Bayes as he ponders why most people in real life took the wrong decision, but he does not waste any more time on MHP. 

Variants of the MHP in which the host does not always open a door, or where he might be trying to help you, or might be trying to cheat you, lend themselves very well to game theoretic study, see wikipedia or Rosenhouse (2009) for references. 

For present purposes, the important point which I think is brought out by a game theoretic approach is that the player does have two decision moments. The player has control over his initial choice. Vos Savant describes the situation at the moment the player must make up his mind whether to switch or stay, and most, but not all, people will instinctively feel that this is the only important decision moment. But the player earlier had a chance to choose any door he liked. Perhaps he would have been wise to think about what he would do if he did make it to the last round of the show, before setting off to the TV studio. There is no way he can ask the advice of a friendly mathematician as he stands on the stage under the dazzling studio lights while the audience is shouting out conflicting advice.

Van Damme (1995; in Dutch) goes a little deeper into the question of why real human players did not behave rationally on the Monty Hall show; this is one of the main questions studied in the psychology, philosophy, artificial intelligence and animal behaviour literature on MHP.  Since ``rational expectations'' play a fundamental role in modern economic theory, the actual facts of the real world MHP, where players almost never switched doors, bodes ill for the application of economic theory to real world economics. The usual rationale for human rational expectations in economics is that humans learn from mistakes. However, the same person did not get to play several times times in the final round of the Monty Hall show, and apparently no-one kept a tally of what had happened to previous contestants, so learning simply did not take place. Nobody thought there would be a point in learning! Instead, the players used their brains, came to the conclusion that there was no advantage in switching, and mostly stuck to their original choice. At this point they do make a rational choice: there would be a much larger emotional loss to their ego on switching and losing, than on staying and losing. Sticking to your door demonstrates moral fortitude. Switching is feckless and deserves punishment.

Interestingly, pigeons (specifically, the Rock Pigeon, \emph{Columba Livia}, the pigeon which tourists feed in city squares all over the world) are very good at learning how to win the Monty Hall game, see Herbranson and Schroeder (2010). They do not burden their little brains thinking about what to do but just go ahead and choose. There is a lot of variation in their initial decisions whether to switch or stay, and observing the results gives them a chance to learn from the past without thinking at all. Only a very small brain is needed to learn the optimal strategy. And these birds are evolutionarily speaking very succesful indeed.

\section{Which assumptions?}

Propositions 1, 2 and 3 tell us in different ways that switching is a good thing. Notice that the mathematical conditions made are successively stronger and the conclusion drawn is successively stronger too. As the conditions get stronger, the scope of application of the result gets narrower: there are more assumptions to be justified. From a mathematical point of view none of these results are stronger than any of the others: they are all \emph{strictly different}. 

The literature on MHP focusses on variants of Proposition 1, and of Proposition 3.  These correspond to what are called the popular or simple or unconditional solution, and the full or conditional solution to MHP. The full solution is mainly to be found in introductory probability or statistics texts, whereas the simple solution is popular just about everywhere else. The intermediate ``Proposition 2'' is only occasionally mentioned in the literature. The full list of conditions in Proposition 3 is often called, at least in the kind of texts just mentioned, the standard or canonical or original MHP. I will just refer to them as the \emph{conventional supplementary assumptions}.

Regarding the word ``original'', it is a historical fact that Selvin (1975a) gave MHP its name, did this in a statistics journal, and wrote down the conventional full list of uniformity assumptions. He proceeded to compute the \emph{unconditional} probability that switching gives the car by enumeration of \emph{nine} equally likely cases, for which he took both the player's initial choice and the location of the car as uniform random, and of course independent of one another. In his second note, Selvin (1975b), he computed the \emph{conditional} probability using now his full list of assumptions concerning the host's behaviour, and fixing the initial choice of the player, but without noting any conceptual, let alone technical, difference at all with his earlier solution. Of course, the number ``2/3'' is the same. In the same note he quotes with approval from a letter from Monty Hall himself who gave the argument of Proposition 1: switching gives the car with probability 2/3 because the initial choice is right with probability 1/3. We know Monty will open a door revealing a goat. Conditioning on an event of probability one does not change the probability that the initial choice was right. 

Thus Selvin set the seeds for subsequent confusion. Let me call his approach the \emph{practical-minded approach}:

\begin{quote} The \emph{right} answer to MHP is ``2/3''. There are many ways to get to this answer, and I am not too much concerned how you get there. As long as you get the right answer 2/3, we're happy. After all, the whole point of MHP is that the initial instinct of everyone hearing the problem is to say ``since the host could anyway open a door showing a goat, his action doesn't tell me anything. There are still two doors closed so they still are equally likely to hide the car. So the probability that switching would give the car is ``1/2'', so I am not going to switch, thank you.
\end{quote}

Selvin's two papers together gave MHP a firm and more or less standard position in the elementary statistics literature. There is a conventional complete specification of the problem. This enables us to write down a finite sample space and allocate a probability to every single outcome. Usually the player's initial choice is taken, in the light of the other assumptions without loss of generality, as fixed. All randomness is in the actions of the host, or \emph{in our lack of any knowledge} about them. This corresponds to whether the writer has a frequentist or a subjectivist slant, often not explicitly stated, but implicit in verbal hints. The question is not primarily  ``should you switch or stay?'', but ``what is \emph{the} probability, or \emph{your} probability, that switching will give the car?''  Typically, as in Selvin's second, conditional, approach, the player's initial choice is already fixed in the problem statement, the host's two actions are already seen as completely random, whether because we are told they are, objectively, or because we are completely ignorant of how they are made, subjectively. The problem typically features in the chapter which introduces conditional probability and Bayes' theorem in the discrete setting. Thus the problem is posed by a maths teacher who wants the student to learn conditional probability. The problem is further reformulated as ``what is the \emph{conditional} probability that switching will give the car''.

In such a context not much attention is being paid to the meaning of probability. After all, right now we are just busy getting accustomed to its calculus. Most of the examples figure playing cards, dice and balls in urns, and so the probability spaces are usually completely specified (all outcomes can be enumerated) and mostly they are symmetric, all elementary probabilities are equal. The student is either supposed to ``know'', or he is told explicitly, that the car is initially equally likely to be behind any of the three doors. The host is supposed to choose at random (shorthand for uniformly, or completely, at random) when he has a choice. Since these facts are given or supposed to be guessed, the initial choice of the player is irrelevant, and we are indeed always told that the player has already picked Door 1.

Well, if MHP is merely an exercise in conditional probability where the mathematical model is specified in advance by the teacher, then it is clear how we are to proceed. But I prefer to take a step back and to \emph{imagine you are on a game show}. How could we ``know'' these probabilities? This is especially important when one has the task of ``explaining'' MHP to non-mathematicians and to non-statisticians.

This is where philosophy, or if you prefer, metaphysics, raises its head. How can one ``know'' a probability; what does it mean to ``know'' a probability?  

I am not going to answer those questions. But I am going to compare a conventional frequentist view -- probability is out there in the real world -- to a conventional Bayesian view -- probability is in the information which we possess. I hope to do this neutrally, without taking a dogmatic stance myself. It is a fact that many amateur users of probability are instinctive subjectivists, not so many are instinctive frequentists. Let us try to work out where either instinct would take us. An important thing to realise is: Bayesian probabilities in, Bayesian probabilities out; frequentist in, frequentist out. I will also emphasize the difference which comes from seeing randomness in the \emph{host's moves} or in the \emph{player's moves}, and the difference which comes from seeing the question as asking for an \emph{action}, or more passively for a \emph{probability}.

For a subjectivist (Bayesian) the MHP is very simple indeed. We only know what we have been told by vos Savant (Whitaker). The wording ``\emph{say}, Door 1'' and ``\emph{say}, Door 3'' (my italics) emphasize that we know nothing about the behaviour of the host, whether in hiding cars or in opening doors. Knowing nothing, the situation \emph{for us} is indistinguishable from the situation in which we had been told in advance that car hiding and door opening was actually done using perfect fair randomizers (unbiased dice throws or coin tosses). Probability is a representation of our uncertain knowledge about the single instance under consideration. Probability calculus is the unique internally consistent way to manipulate uncertain knowledge. To start off with, since we know nothing, we may as well choose our door initially according to our personal lucky number, so we picked Door 1. Having seen the host open Door 3, we would now be prepared to bet at odds 2:1 that the car is behind Door 2. The new situation is indistinguishable for us from a betting situation with fair odds 2:1 based on a perfect fair randomizer (by which I simply refer to the kind of situation in which subjectivists and objectivists tend to agree on the probabilities, even if they think they mean something quite different). 

Does the Bayesian (a subjectivist) need Bayes' theorem in order to come to his conclusion? I think the answer is \emph{no}. For a subjectivist the door numbers are irrelevant. The problem is unchanged by renumbering of the doors. His beliefs about whether the car would be behind the other door in any of the six situations (door chosen, door opened) would be the same. He has no need to actively compute the conditional probability in order to confirm what he already knows.  He could use Proposition 3 but is only interested in Proposition 1. The symmetry argument of my proof of Proposition 3 is the mathematical expression of his prior knowledge that he may ignore the door numbers and just compute an unconditional probability. Do you notice the symmetry in advance and take advantage of it, or just compute away and notice it afterwards? It doesn't matter. The answer is $2/3$ and it is a conditional and unconditional probability at the same time. 

What is important to realise is that the probability computed by a subjectivist is also a subjective probability. Starting from probabilities which express prior personal expectations, the probability we have derived expresses how our prior personal expectations as to the location of the car should logically be modified on seeing the host open Door 3 and reveal a goat in response to our choice of Door 1. These probabilities say nothing about what we would expect to see if the game was repeated many times. We might well expect to see a systematic bias in the location of the car or the choice of the host. Our uniform prior distributions express the fact that our prior beliefs or prior information about such biases are invariant under permutations of the door numbers.

For a frequentist, MHP is harder -- unless the problem has already been mathematized, and the frequentist has been told that the car is hidden completely at random and the host chooses completely at random (when he has a choice) too. Personally, I don't find this a very realistic scenario. I can think of one semi-realistic scenario, and that is the scenario proposed by Morgan et al.~(1991a). Suppose we have inside information that every week, the car is hidden uniformly at random, in order to make its location totally unpredictable to all players. However Monty's choice of door to open, when he has a choice, is something which goes on in his head at the spur of the moment. In this situation we may as well let our initial choice of door be determined by our lucky number, e.g., Door 1. Proposition 2 tells us that not only is always switching a wise strategy, it tells us that we cannot do better. No need to worry our heads about \emph{what is} the conditional probability. It is never against switching.

There is just one solution which does not require any prior knowledge at all; instead it requires prior action. Taking our cue from the game theoretic solution, we realize that the player has two opportunites to act, not one. We allow ourselves the latitude to reformulate vos Savant's words as ``You are \emph{going to be} on a game show...''. We advise vos Savant, or her correspondent Craig Whitaker, to take fate into his or her own hands. Before the show, pick your lucky number (1, 2 or 3) by a toss of a fair die. When you make it to the final round, choose that door and thereafter switch. By Proposition 1 you'll come home with the car with probability 2/3, and by Proposition 4 that's the best you can hope for.

Both frequentists and subjectivists will agree that you win the car with probability 2/3 in this way. They will likely disagree about whether the conditional probability that you win, given door chosen and door opened, is also 2/3. I think the frequentist will say that he does not know it since he doesn't know anything about the two host actions, while the subjectivist will say that he does know the conditional probability (and it's 2/3) for the very same reason. So what?

\section{Conclusion}

The Monty Hall Problem offers much more to the student than a mindless exercise in conditional probability. It also offers a challenging exercise in mathematical modelling. I notice three important lessons. (1) The more you assume, the more you can conclude, but the more limited are your conclusions. The honest answer is not one mathematical solution but a range of solutions. (2) Whether you are a subjectivist or a frequentist affects the ease with which you might make probabilistic assumptions but simultaneously affects the meaning of the conclusions. (3). Think out of the box. Vos Savant asks for an \emph{action}, not for a \emph{probability}. The player has \emph{two} decision moments during the show, not one.

\section*{References} ~
\vskip -0.5cm
\raggedright 
\frenchspacing 
\parskip 0.2 cm
\leftskip 0.5 cm
\parindent -0.5 cm

Burns, B. D. and Wieth, M. (2004), The collider principle in causal reasoning: why the Monty Hall dilemma is so hard,
 {\it J. Experimental Psychology: General\/} {\bf 133\/} 434--449. {\tt http://www.psych.usyd.edu.au/staff/bburns/Burns\_Wieth04\_man.pdf}

van Damme, E. E. C. (1995), Rationaliteit. {\it Econ. Stat. Berichten} 15--11--1995, 1019.

Herbranson, W. T. and Schroeder, J. (2010), Are birds smarter than mathematicians? Pigeons (Columba livia) perform optimally on a version of the Monty Hall Dilemma.  {\it J. Comp. Psychol.\/}  {\bf 124\/} 1--13. {\tt http://people.whitman.edu/$\sim$herbrawt/HS\_JCP\_2010.pdf}

Hogbin, M. and Nijdam, W. (2010), Letter to the editor. {\it Am. Statist.\/} {\bf 64} 193.

Morgan, J. P., Chaganty, N. R., Dahiya, R. C., and Doviak, M. J. (1991a), Let's make a deal: The player's dilemma. {\it Am. Statist.\/}  {\bf 45\/} 284--287.

Morgan, J. P., Chaganty, N. R., Dahiya, R. C., and Doviak, M. J. (1991b), Rejoinder to Seymann's comment on ``Let's make a deal: the player's dilemma''.  {\it Am. Statist.\/}  {\bf 45\/} 289.

Morgan, J. P., Chaganty, N. R., Dahiya, R. C., and Doviak, M. J.  (2010), Response to Hogbin and Nijdam's letter,  {\it Am. Statist.\/} {\bf 64} 193--194.

Mosteller, F. (1965), \emph{Fifty Challenging Problems in Probability with Solutions}. Dover, New York.

Nalebuff, B. (1987), Puzzles: Choose a curtain, duel-ity, two point conversions, and more, {\it J.  Econ. Perspectives\/} {\bf 1} (2) 157--163.

Rosenhouse, J. (2009), \emph{The Monty Hall Problem}, Oxford University Press.

Rosenthal, J. S. (2005), {\it Struck by Lightning: The Curious World of Probabilities}. Harper Collins, Canada.

Rosenthal, J. S. (2008), Monty Hall, Monty Fall, Monty Crawl, \emph{Math Horizons\/} {\bf 16\/} 5--7. \texttt{http://probability.ca/jeff/writing/montyfall.pdf}

Selvin, S. (1975a), A problem in probability (letter to the editor). {\it Am. Statist.} {\bf 29}  67.

Selvin, S. (1975b), On the Monty Hall problem (letter to the editor). {\it Am. Statist.\/} {\bf  29\/} 134.

Seyman, R. G. (1991), Comment on ``Let's make a deal: the player's dilemma''. {\it Am. Statist.\/} {\bf 64} 287--288.

\end{document}